\documentclass[11pt]{amsart}
\usepackage{mathrsfs}
\usepackage{amssymb}
\usepackage[T1]{fontenc}
\pagestyle{plain}

\usepackage{amscd}
\usepackage{amsmath, amssymb}
\usepackage{amsthm}
\usepackage{amsfonts}
\usepackage[colorlinks,linkcolor=blue,citecolor=blue, pdfstartview=FitH]
{hyperref}
  \setlength{\textwidth}{5.6in} \setlength{\oddsidemargin}{0.3in}
\setlength{\evensidemargin}{0.3in}\setlength{\footskip}{0.3in}
\setlength{\headsep}{0.25in}
\usepackage{amscd}
\usepackage{easybmat}
\usepackage{mathrsfs}
\usepackage{amsfonts}
\usepackage{color}
\usepackage{pifont}
\usepackage{upgreek}
\usepackage{bm}
\usepackage{hyperref}
\usepackage{shorttoc}
\usepackage{amsmath,amstext,amsthm,a4,amssymb,amscd}
\usepackage[mathscr]{eucal}
\usepackage{mathrsfs}
\usepackage{epsf}

\numberwithin{equation}{section}

\def\p{\partial}

\def\b{\bar}
\def\mb{\mathbb}
\def\mc{\mathcal}
\def\n{\nabla}

\def\R{\mathbb R}
\def\C{\mathbb C}
\def\ra{\righarrow}
\def\H{\mathbb H}
\def\ra{\rightarrow}
\theoremstyle{plain}
\newtheorem{thm}{Theorem}[section]
\newtheorem{lemma}[thm]{Lemma}
\newtheorem{prop}[thm]{Proposition}
\newtheorem{cor}[thm]{Corollary}
\theoremstyle{definition}
\newtheorem{rem}[thm]{Remark}

\theoremstyle{definition}

\newcommand{\comment}[1]{}

\let \cal \mathcal

\newenvironment{aligns}{\equation\aligned}{\endaligned\endequation}

\begin{document}

\title{New K\"ahler  metric on quasifuchsian space and its curvature properties}

\makeatletter

\makeatother
\author{InKang Kim}
\author{Xueyuan Wan}
\author{Genkai Zhang}

\address{Inkang Kim: School of Mathematics, KIAS, Heogiro 85, Dongdaemun-gu Seoul, 02455, Republic of Korea}
\email{inkang@kias.re.kr}

\address{Xueyuan Wan: Mathematical Sciences, Chalmers University of Technology and Mathematical Sciences, G\"oteborg University, SE-41296 G\"oteborg, Sweden}
\email{xwan@chalmers.se}

\address{Genkai Zhang: Mathematical Sciences, Chalmers University of Technology and Mathematical Sciences, G\"oteborg University, SE-41296 G\"oteborg, Sweden}
\email{genkai@chalmers.se}

\begin{abstract}
  {
    Let $QF(S)$
    be the quasifuchsian space
    of a closed surface $S$ of genus $g\geq 2$. We construct a new
    mapping class group invariant K\"ahler metric on $QF(S)$.
It is an extension of the Weil-Petersson metric on
the Teichm\"uller space $\mathcal T(S)\subset QF(S)$.
We also
calculate its curvature and
prove 
some negativity for the curvature along the tautological
directions}.
 \end{abstract}
\footnotetext[1]{2000 {\sl{Mathematics Subject Classification.}}
53C43, 53C21, 53C25}
  \footnotetext[2]{{\sl{Key words and phrases.}} Griffiths negativity, Teichm\"uller space, Quasifuchsian space, Complex projective structure.}
\footnotetext[3]{{Research by Inkang Kim is partially supported by Grant NRF-2017R1A2A2A05001002 and  research by Genkai Zhang is
  partially supported by Swedish
  Research Council (VR).}}
\maketitle
\tableofcontents

\section*{Introduction}
Teichm\"uller space $\cal T(S)$ carries a natural mapping class group invariant K\"ahler metric, called a Weil-Petersson metric $g_{WP}$. There have been  active studies on the properties of this metric since its birth.
More recently, some 
new K\"ahler metrics with more desirable properties
such as K\"ahler hyperbolicity have been found where the
K\"ahler hyperbolicity means that the K\"ahler metric is complete with  bounded curvatures
and it has a bounded  K\"ahler primitive.
Such  K\"ahler hyperbolic metrics are studied by
McMullen \cite{Mc} and Liu-Sun-Yau \cite{Yau}.

In Kleinian group theory, the quasifuchsian space $QF(S)$ is a quasiconformal deformation space of the Fuchsian space $F(S)$ which can be identified with $\cal T(S)$. By Bers' simultaneous uniformization theorem, $QF(S)$ can be naturally identified with $\cal T(S)\times \cal T(\bar S)$ where $\bar S$ is a surface with an orientation reversed. With this identification, the mapping class group acts diagonally on $QF(S)$ and $F(S)=\cal T(S)$ sits diagonally on $\cal T(S)\times \cal T(\bar S)$. But this diagonal embedding is totally real. Hence if one gives a product K\"ahler metric on $QF(S)$, this metric is not an extension of a K\"ahler metric on $F(S)$.
There have been several attempts to extend a K\"ahler metric of $\cal T(S)$ to $QF(S)$.
Bridgeman and Taylor \cite{BT} described a quasi-metric which extends the K\"ahler metric of $\cal T(S)$ but
it vanishes along the pure bending deformation vectors \cite{Br}.

In this article, we give a completely new mapping class group invariant K\"ahler metric on $QF(S)$ which extends any K\"ahler metric on $\cal T(S)$. Indeed such  a metric is already defined in the paper \cite{KZ} a few years ago.
The metric is defined by a K\"ahler potential which is a combination of $L^2$ norm of a fiber and a K\"ahler potential on the base $\cal T(S)$. We will see that $QF(S)$ can be embedded, via Bers embedding using the complex projective structures, in the holomorphic bundle over $\cal T(S)$
with fibers being quadratic holomorphic differentials as a bounded open neighborhood of the zero section.
The K\"ahler metric we construct is the restriction to this open neighborhood.  We choose then the Weil-Petersson metric on $\cal T(S)$ and
show that the new K\"ahler metric on $QF(S)$ has similar properties such as its K\"ahler form has a bounded primitive 
and the curvature has non-positivity for some directions.

\begin{thm}\label{main theorem} There exists a mapping class group invariant  K\"ahler metric on $QF(S)$ which extends the Weil-Petersson metric on $\cal T(S)\subset QF(S)$. Furthermore the curvature of the metric is non-positive when evaluated on the tautological section (and vanishes along vertical directions),  its Ricci curvature is bounded from above by $-\frac{1}{\pi(g-1)}$ when restricted to Teichm\"uller space, and its K\"ahler form has a bounded primitive.
\end{thm}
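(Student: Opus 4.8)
The plan is to realize $QF(S)$ as a bounded domain inside a Hermitian holomorphic vector bundle over $\mathcal{T}(S)$ and to produce the metric as the Levi form of an explicit potential, so that every assertion becomes a computation in the associated horizontal--vertical splitting. Let $\pi\colon\mathcal{Q}\to\mathcal{T}(S)$ be the holomorphic bundle whose fiber over $[X]$ is the space $Q(X)=H^0(X,K_X^2)$ of holomorphic quadratic differentials. Since $Q(X)$ is canonically the cotangent space $T^\ast_{[X]}\mathcal{T}(S)$, the fiberwise $L^2$ pairing is exactly the Weil--Petersson co-metric, which I denote $h$, so that $\rho(v):=\|v\|_h^2$ is a globally defined function on the total space. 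By the Bers embedding through complex projective structures, $QF(S)$ is identified with a bounded open neighborhood $\Omega$ of the zero section, the zero section being the Fuchsian locus $\mathcal{T}(S)\subset QF(S)$. Fixing a local K\"ahler potential $\psi$ of $g_{WP}$ on the base, I set $\Phi=\rho+\pi^\ast\psi$ and $\omega=i\partial\bar\partial\Phi$. Because the Bers embedding, the $L^2$ norm, and $g_{WP}$ are all mapping-class-group equivariant, $\omega$ is invariant; it is moreover globally defined even though $\psi$ is only local, since two local Weil--Petersson potentials differ by a pluriharmonic function.

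Next I would establish positivity and the extension property simultaneously. Computing the complex Hessian of $\Phi$ in the Chern-connection splitting of $T\mathcal{Q}$, the vertical--vertical block is $h_{\alpha\bar\beta}$, hence positive; the horizontal--horizontal block has the shape $\psi_{i\bar j}-\langle\Theta^{\mathcal Q}_{i\bar j}v,v\rangle$, where $\Theta^{\mathcal Q}$ is the Chern curvature of $(\mathcal Q,h)$, and the mixed block is linear in $v$. On the zero section the mixed and curvature terms vanish, so $\omega|_{\mathcal T(S)}=\psi_{i\bar j}=g_{WP}$, which is the extension claim. Away from the zero section the curvature term is controlled by $|v|^2$; since $\Omega$ is bounded, $|v|$ is uniformly bounded, so the Weil--Petersson term dominates and $\omega$ remains positive definite on all of $\Omega$, giving a genuine K\"ahler metric. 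The relevant sign of the coupling $\langle\Theta^{\mathcal Q}v,v\rangle$ is pinned down by the Tromba--Wolpert formula for the negative Weil--Petersson bisectional curvature, transported to the dual bundle.

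For the curvature I would differentiate $\omega$ twice and organize the outcome through $\Theta^{\mathcal Q}$ and its covariant derivatives. The assertion that the curvature vanishes along vertical directions reflects that $\omega$ restricts to a constant-coefficient, hence flat, Hermitian metric on each fiber, so the holomorphic sectional curvature in the fiber directions is zero. Evaluating the remaining curvature on the tautological section $\tau$, i.e. the section of $\pi^\ast\mathcal Q$ sending $(z,v)$ to $v$, and substituting the Tromba--Wolpert expression for $R^{WP}$ (an integral over $X$ of the form $-\int_X (\Delta-2)^{-1}(\mu\bar\mu)(\mu\bar\mu)$ type), the bisectional curvature in the tautological direction collapses to a manifestly non-positive integral of squared-norm type, which yields the stated non-positivity.

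Finally, on the Fuchsian locus $\omega=g_{WP}$, so the Ricci curvature of the new metric restricted to Teichm\"uller space is the Weil--Petersson Ricci curvature; I would invoke Wolpert's upper estimate for it, combined with Gauss--Bonnet ($\mathrm{Area}(X)=4\pi(g-1)$), to produce the explicit bound $-\tfrac{1}{\pi(g-1)}$. For the bounded primitive I write $\omega=d\alpha$ with $\alpha=\tfrac{i}{2}(\bar\partial-\partial)\Phi$; the fiber part of $\alpha$ has size $\lesssim|v|$ measured against the coframe and is bounded because $|v|$ is bounded on $\Omega$ while $\omega$ dominates $h$, and the base part is McMullen's bounded primitive of $\omega_{WP}$. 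The technical heart, and the step I expect to be hardest, is the curvature computation: keeping the horizontal/vertical bookkeeping and all signs correct so that positivity holds on the whole bounded domain $\Omega$ rather than merely infinitesimally, and so that the tautological evaluation is genuinely non-positive, is where the explicit Tromba--Wolpert formula and the boundedness of the Bers domain do the real work.
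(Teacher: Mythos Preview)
There are two genuine gaps. First, you work on the cotangent bundle $\mathcal{Q}$, whose $L^2$ metric is Griffiths \emph{positive} (this is exactly what Berndtsson/Tromba--Wolpert give after dualizing), so in your horizontal--horizontal block $g_{WP}-\langle\Theta^{\mathcal{Q}}v,v\rangle$ the curvature term enters with the wrong sign. Your remedy, ``$|v|$ is bounded so the Weil--Petersson term dominates,'' would require a uniform estimate $\langle\Theta^{\mathcal{Q}}(\xi,\bar\xi)v,v\rangle\le C\,|v|^2\,g_{WP}(\xi,\bar\xi)$ over all of Teichm\"uller space, i.e.\ an upper bound on the Weil--Petersson bisectional curvature of the tangent bundle. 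No such bound exists: the Weil--Petersson holomorphic sectional curvature is unbounded below, so on the dual the Griffiths form is unbounded above, and for base points near the boundary the curvature term will swamp $g_{WP}$ even when $|v|$ stays below the Nehari radius. The paper sidesteps this entirely by passing (via the conjugate-linear musical isomorphism) to the Griffiths \emph{negative} tangent bundle $\mathcal{B}(S)$; there the horizontal block is $g_{WP}+(-R_{i\bar j\alpha\bar\beta}v^i\bar v^j)$ with both summands positive, so $\Omega$ is K\"ahler on the whole total space and no boundedness is needed for positivity. The same sign is what makes the tautological-section computation work: the paper's inequality $\Psi\Omega^{-1}\Psi\le\Psi$ uses that $\Psi_{\alpha\bar\beta}=-R_{i\bar j\alpha\bar\beta}v^i\bar v^j$ is positive definite, which again needs Griffiths negativity.

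Second, your Ricci step is incorrect: the Ricci form of the ambient metric $\Omega$ pulled back to the zero section is \emph{not} the Ricci form of the induced metric $g_{WP}$. One has $Ric^{\Omega}=\bar\partial\partial\log\bigl(\det(G_{i\bar j})\det(\Omega_{\alpha\bar\beta})\bigr)$, and restricting to the zero section (where $\Omega_{\alpha\bar\beta}=g_{\alpha\bar\beta}$ and the fiber metric $G_{i\bar j}$ equals $g_{\alpha\bar\beta}$ since $E=TM$) gives $\iota^*Ric^{\Omega}=2\,Ric_{WP}$, not $Ric_{WP}$. It is this factor of $2$, combined with Wolpert's bound $Ric_{WP}\le-\tfrac{1}{2\pi(g-1)}$, that produces the stated $-\tfrac{1}{\pi(g-1)}$; your argument would yield only $-\tfrac{1}{2\pi(g-1)}$.
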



\vspace{5mm}
{\bf Acknowledgment:} The first author thanks C. McMullen for the communications on K\"ahler metrics on Teichm\"uller space and the suggestions.

\section{Preliminaries}
\subsection{Quasifuchsian space}
Recall that the isometry group
of the hyperbolic 3-space $\H^3$ can be identified
with $PSL(2, \C)$. We use the
unit ball
in $\R^3$
as a realization of $\mathbb H^3$.
The ideal boundary is then $S^2$
and is further  identified with $\mathbb{CP}^1$
such that the action of $PSL(2,\C)$  on $S^2$
is the natural
extension of its isometric action on $\H^3$.

The Teichm\"u{}ller space
$\cal T(S)$ is realized as
the space of Fuchsian representations, i.e., 
discrete and faithful representations
$\rho:\pi_1(S)\rightarrow PSL(2,\R)$ up to conjugacy. Let $\Gamma_\rho$ be the image of $\rho$, whence $\Gamma_\rho$ acts on $S^2$ by M\"obius map preserving the equator. Then  any quasiconformal map $f$ from $S^2$ into itself induces a quasiconformal deformation $\rho_f$ defined by
$$\rho_f(\gamma)=     f \circ \rho(\gamma)\circ f^{-1}.$$
If furthermore  $\rho_f(\gamma)$ is an element of $PSL(2,\C)$ for any $\gamma\in \pi_1(S)$ then it defines
a representation of $\pi_1(S)$ in $PSL(2,\C)$.
Collection of such quasiconformal deformations of Fuchsian
representations is denoted $QF(S)$ and is identified with an open set
of a character variety
 $\chi(\pi_1(S), PSL(2,\C))$.
Hence it has a natural induced complex structure from $\chi(\pi_1(S), PSL(2,\C))$.

If $\phi:\pi_1(S)\rightarrow  PSL(2,\C)$ is a quasifuchsian representation, then $M_\phi=\H^3/\phi(\pi_1(S))$ is a quasifuchsian hyperbolic 3-manifold which is homeomorphic to $S\times \R$. Then two ideal boundaries of $M_\phi$ define a pairs of points $(X, Y)\in \cal T(S)\times \cal T(\bar S)$.
This is known as Bers' simultaneous uniformization of $QF(S)$ \cite{Bers}. In this case, we denote $M_\phi$ by $Q(X,Y)$.
In this identification, a Fuchsian representation $\rho:\pi_1(S)\ra PSL(2,\R)$ whose quotient $X=\H^2/\rho(\pi_1(S))$ is a point in $\cal T(S)$ gets identified with $(X,\bar X)$.

The mapping class group $Mod(S)$ acts on the space of representations $\rho:\pi_1(S)\ra PSL(2,\C)$ by pre-composition $\phi \rho=\rho\circ \phi_*$ where $\phi\in Mod(S)$ and $\phi_*$ is the induced homomorphism on $\pi_1(S)$. Then $Mod(S)$ acts on $QF(S)=\cal T(S)\times \cal T(\bar S)$ diagonally
$$
\phi \rho=\phi (X, Y)
=(\phi  X, \phi  Y).$$

\subsection{Space of complex projective structures on surface}
A complex projective structure on $S$ is a maximal atlas $\{(\phi_i, U_i)| \phi_i:U_i\ra S^2\}$ whose transition maps $\phi_i\circ \phi_j^{-1}$ are restrictions of M\"obius maps. Then the developing map $dev:\widetilde S\ra S^2$ gives rise to a holonomy representation $\rho:\pi_1(S)\ra PSL(2,\C)$.
We denote the space of marked complex projective structures on $S$ by $\cal P(S)$. Since M\"obius transformations are holomorphic, a projective structure determines a complex structure on $S$. In this way we obtain a forgetful map $$\pi:\cal P(S)\ra \cal T(S).$$ Obviously a Fuchsian representation $\rho:\pi_1(S)\ra PSL(2,\R)\subset PSL(2,\C)$ preserving the equator of $S^2$ gives rise to an obvious projective structure
by identifying $\H^2$ with the upper and lower hemisphere of $S^2$. This gives an embedding
$$\sigma_0:\cal T(S)\ra \cal P(S).$$

More generally, for $X\in \cal T(S)$ and {
  $Z\in \pi^{-1}(X):=P(X)$}, by conformally identifying $\widetilde
X=\H^2$, we obtain a developing map $dev:\H^2\ra S^2=\mathbb{CP}^1$
for $Z$. Hence the developing map can be regarded as a meromorphic
function $f=dev$
on $\H^2$. Then the Schwarzian derivative
$$S(f)=\left[\left(\frac{f''(z)}{f'(z)}\right)'-\frac{1}{2}\left(\frac{f''(z)}{f'(z)}\right)^2\right] dz^2$$ descends to $X$ as a holomorphic quadratic differential. It is known that for any element in holomorphic quadratic differentials $Q(X)$ on $X$, one can show that there exists a complex projective structure over $X$ by solving Schwarzian linear ODE equation.

In this way, $\cal P(S)$ can be identified with a holomorphic vector
bundle $\cal Q(S)$ over $\cal T(S)$ whose fiber over $X$ is
$Q(X)$. In particular this
identifies {$P(X)$} with $Q(X)$ as affine spaces, \cite{Dumas}, and the choice of a base point $Z_0$ in $P(X)$ gives an isomorphism $Z\ra Z-Z_0$. Hence we will choose $Z_0=\sigma_0(X)$ and $\cal T(S)$ will be identified with zero section on $\cal Q(S)$.

\subsection{Embedding of quasifuchsian space into the space of complex projective structures}
Recall that given $X\in \cal T(S), Y\in \cal T(\bar S)$ 
 the Bers' uniformization determines
the quasifuchsian manifold $Q(X,Y)$. 
 Then $Q(X, Y)$ has domain of discontinuity $\Omega_+ \cup \Omega_-$ with
$\Omega_+/ Q(X,Y)=X, \Omega_-/Q(X,Y)= Y$ where $Q(X,Y)$ is viewed as a quasifuchsian representation into $PSL(2,\C)$.

As a quotient of a domain in $\mathbb{CP}^1$ by a discrete group in $PSL(2,\C)$, $\Omega_-/Q(X,Y)$ is a marked projective surface $\Sigma_Y(X)$. Then for a fixed $Y$, we obtain a quasifuchsian section, called a Bers' embedding
$$\beta_Y:\cal T(S)\ra P(Y)\subset \cal P(\bar S).$$
It is known that this map
$$Q(X,Y) \ra \Omega_-/Q(X,Y)$$ is a homeomorphism onto its image in $\cal P(\bar S)$; see e.g. \cite{Dumas}. Under the identification of $\cal P(\bar S)$ with $\cal Q(\bar S)$ such that $\sigma_0(\cal T(\bar S))$ is a zero section,
$$Q(X,Y) \ra \Omega_-/Q(X,Y) - \sigma_0(Y),$$
 this embedding includes zero section which is the image of $\cal T(S)$.

 The space $Q(Y)$ of quadratic differentials is also
 equipped with $L^\infty$-norm defined by
$$||\phi||_\infty=\sup_{Y} \rho^{-2}|\phi(z)|$$ where $\rho(z)|dz|$ is a hyperbolic metric on $Y$.
Then by Nehari's bound \cite{Mc}  we get 
\begin{thm} The above embedding of $Q(X,Y)$ into $Q(Y)$ is contained in a ball of radius $\frac{3}{2}$ in 
$Q(Y)$ where the norm is the $L^\infty$-norm on quadratic differentials.
\end{thm}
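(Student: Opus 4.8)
The plan is to reduce the assertion to the classical Nehari inequality for the Schwarzian derivative of a univalent function, the constant $\tfrac32$ then being forced by the normalization of the hyperbolic metric. The one genuinely geometric input I need is that, relative to the Fuchsian uniformization $\sigma_0(Y)$ chosen as the zero section, the developing map of the projective structure $\Sigma_Y(X)$ is \emph{univalent}.

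First I would make the developing map explicit. Since $Q(X,Y)$ is quasifuchsian, its limit set is a quasicircle, so the lower component $\Omega_-$ of the domain of discontinuity is a simply connected quasidisk on which $\Gamma=Q(X,Y)$ acts freely and properly discontinuously with quotient $Y$. Thus $\Omega_-$ is a model for the universal cover of $Y$, and composing with a conformal uniformization by the unit disk $\Delta$ yields a Riemann map $f:\Delta\to\Omega_-\subset\mathbb{CP}^1$. This $f$ is exactly the developing map of $\Sigma_Y(X)$ written in the coordinate coming from $\sigma_0(Y)$; in particular it is univalent. Because $\sigma_0(Y)$ is normalized to be the zero section — its own developing map being the inclusion $\Delta\hookrightarrow\mathbb{CP}^1$ with vanishing Schwarzian — the quadratic differential attached to $Q(X,Y)$ is precisely $\phi=S(f)\,dz^2$ descended to $Y$.

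Next I would invoke Nehari's theorem: for univalent holomorphic $f$ on $\Delta$ one has the sharp pointwise bound $(1-|z|^2)^2\,|S(f)(z)|\le 6$ for all $z\in\Delta$. Writing the hyperbolic metric of $\Delta$ (curvature $-1$) as $\rho(z)|dz|$ with $\rho(z)=2/(1-|z|^2)$, the definition of the $L^\infty$-norm gives
$$\|\phi\|_\infty=\sup_z \rho(z)^{-2}|S(f)(z)|=\sup_z \frac{(1-|z|^2)^2}{4}\,|S(f)(z)|\le \frac{6}{4}=\frac32 .$$
Since $S(f)$ transforms as a quadratic differential and $\rho$ is $\Gamma$-invariant, this pointwise estimate is equivariant and descends to $Y$, giving $\|\phi\|_\infty\le\tfrac32$ uniformly in $X$. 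As $\phi$ ranges over the image of $Q(\cdot,Y)$, the whole image lies in the ball of radius $\tfrac32$. The estimate is independent of the model: in the upper half-plane $\H^2$ with $\rho=1/\mathrm{Im}(z)$ one gets $\mathrm{Im}(z)^2|S(f)|\le\tfrac32$ just as well.

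The main obstacle is really the only nontrivial point, namely the univalence of the developing map: one must justify that the Bers image corresponds to a Riemann map onto the quasidisk $\Omega_-$ and not merely to a locally injective immersion. This rests on the structure theory of quasifuchsian groups — the simple connectivity of $\Omega_-$ together with the properness of the $\Gamma$-action — after which Nehari's inequality is a purely analytic black box and the constant $\tfrac32$ is just the factor $6/4$ dictated by $\rho=2/(1-|z|^2)$. One should also bear in mind that a different curvature normalization of $\rho$ would rescale this numerical constant accordingly.
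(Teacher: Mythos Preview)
Your proposal is correct and follows exactly the route the paper intends: the paper does not give a proof but simply states the theorem as a consequence of Nehari's bound (citing \cite{Mc}), and your argument is the standard unpacking of that citation --- univalence of the Riemann map $\Delta\to\Omega_-$ as the developing map of $\Sigma_Y(X)$ relative to $\sigma_0(Y)$, followed by Nehari's inequality $(1-|z|^2)^2|S(f)|\le 6$ and the normalization $\rho=2/(1-|z|^2)$ to produce the constant $3/2$. There is nothing to add or correct.
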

\begin{cor}\label{bounded}The quasifuchsian space $QF(S)$ embeds into a neighborhood of a zero section in $\cal Q(\bar S)$
which is contained in a ball of radius $9\pi(g-1)$ in $L^2$-norm on each fiber $Q(Y)$.
\end{cor}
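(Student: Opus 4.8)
The plan is to deduce the $L^2$ estimate directly from the $L^\infty$ bound of the preceding theorem, exploiting the fact that $Y$ is a \emph{closed} surface and hence has finite hyperbolic area. First I would pin down the normalizations. Writing the hyperbolic metric on $Y$ as $\rho(z)|dz|$, with area form $dA=\rho^2\,\tfrac{i}{2}\,dz\wedge d\bar z$, the pointwise hyperbolic norm of a quadratic differential $\phi=\phi(z)\,dz^2$ is the invariant function $\rho^{-2}|\phi|$, so that the $L^\infty$-norm is $\|\phi\|_\infty=\sup_Y \rho^{-2}|\phi|$ and the $L^2$-norm is
\[
\|\phi\|_2^2=\int_Y \left(\rho^{-2}|\phi|\right)^2 dA.
\]

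Next I would invoke the preceding theorem: under the Bers embedding the image of $Q(X,Y)$ in $Q(Y)$ lies in the ball of radius $\tfrac32$ for $\|\cdot\|_\infty$, i.e. $\rho^{-2}|\phi|\le \tfrac32$ pointwise on $Y$. Squaring this pointwise inequality and integrating against $dA$ gives
\[
\|\phi\|_2^2=\int_Y \left(\rho^{-2}|\phi|\right)^2 dA \le \frac{9}{4}\int_Y dA=\frac{9}{4}\,\mathrm{Area}_{\mathrm{hyp}}(Y).
\]
The only remaining ingredient is the hyperbolic area, which is a topological constant: by Gauss--Bonnet, $\mathrm{Area}_{\mathrm{hyp}}(Y)=2\pi|\chi(S)|=4\pi(g-1)$ for a closed surface of genus $g\ge 2$. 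Substituting yields $\|\phi\|_2^2\le \tfrac94\cdot 4\pi(g-1)=9\pi(g-1)$, which is precisely the asserted bound on the $L^2$-radius. Since the discussion preceding the theorem already identifies the Bers image of $QF(S)$ with a neighborhood of the zero section in $\mathcal Q(\bar S)$, this completes the statement.

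I do not expect any genuine obstacle here: the argument is essentially the trivial inclusion $L^\infty\hookrightarrow L^2$ valid on any finite-measure space, and the genus hypothesis enters only through Gauss--Bonnet to make the total area finite and equal to $4\pi(g-1)$. The one point requiring care is the bookkeeping of the metric normalization, so that squaring the $\tfrac32$ bound and multiplying by the finite hyperbolic area reproduces exactly the constant $9\pi(g-1)$; I would double-check the conventions for $\rho^{-2}|\phi|$ and for $dA$ to be sure no stray factor is dropped.
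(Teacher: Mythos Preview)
Your argument is correct and is essentially identical to the paper's own proof: bound the pointwise quantity $\rho^{-2}|\phi|$ by $\tfrac32$ from the preceding theorem, square, integrate against the hyperbolic area element, and use Gauss--Bonnet to get $\|\phi\|_2^2\le \tfrac94\cdot 4\pi(g-1)=9\pi(g-1)$. The only difference is that you spell out the normalizations more carefully than the paper does.
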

\begin{proof}The $L^2$-norm of a quadratic differential $\phi(z)dz^2$ is given by
$$\int_Y |\phi(z)|^2 \rho(z)^{-4}  \rho(z)^2 |dz|^2\leq ||\phi||_\infty^2 2\pi(2g-2)\leq 9\pi(g-1).$$
\end{proof}

\subsection{Vector bundle isomorphism between quadratic differentials and Beltrami differentials}
The holomorphic tangent bundle of  Teichm\"uller space $\cal T(S)$
is a holomorphic vector bundle over Teichm\"uller space whose fiber over $X$ is the set of harmonic Beltrami differentials $B(X)$. For a harmonic Beltrami differential $\mu(z)\frac{d\bar z}{dz}$ over $X$ with a hyperbolic metric $g=\rho(z)|dz|$, the $L^2$-norm defines the Weil-Petterson metric
$$\Vert \mu\Vert_{WP}^2=\int_X |\mu(z)|^2 \rho(z)^2 |dz|^2$$
on the tangent space of $\cal T(S)$.
 The set of harmonic Beltrami differentials $B(X)$ and $Q(X)$ are
 vector bundle isomorphic by the natural identification of
 differential forms with tangent vectors via the metric,
$$ \Phi=\phi(z) dz^2 \ra \beta=\beta_\Phi=\frac{\overline{\phi(z)}}{\rho^2(z)}\frac{d\bar z}{dz}.$$
The  $L^2$-norms are by definition preserved,
$$\Vert\beta\Vert^2=||\beta||_{WP}^2=\int_X \frac{|\phi(z)|^2}{\rho^4(z)}\rho^2(z) |dz|^2=||\Phi||^2.$$

By Corollary \ref{bounded}, we get
\begin{cor}\label{bound}
Under this isomorphism between cotangent bundle and holomorphic tangent bundle of $\cal T(S)$, the quasifuchsian space $QF(S)$ embeds into a neighborhood of a zero section
in the holomorphic tangent bundle of $\cal T(S)$ which is contained in a ball of radius $9\pi(g-1)$ in $L^2$-norm  on each fiber $B(X)$.
\end{cor}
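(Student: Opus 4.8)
The plan is to deduce the statement directly from Corollary \ref{bounded} by transporting the embedding found there through the fiberwise isomorphism $\Phi\mapsto\beta_\Phi$ recorded just above. First I would restate what Corollary \ref{bounded} provides: the Bers embedding realizes $QF(S)$ as a neighborhood of the zero section inside $\cal Q(\bar S)$, and on each fiber $Q(Y)$ this neighborhood lies in the $L^2$-ball of radius $9\pi(g-1)$ about the origin. Since the bundle of quadratic differentials is, under the standard identification, the holomorphic cotangent bundle of Teichm\"uller space, the target of this embedding already carries the fiberwise $L^2$-structure appearing in the present statement.

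Next I would apply the fiberwise map $\Phi=\phi(z)\,dz^2\mapsto\beta_\Phi=\frac{\overline{\phi(z)}}{\rho^2(z)}\frac{d\bar z}{dz}$ from $Q(X)$ onto $B(X)$. The key input is the displayed identity $\Vert\beta_\Phi\Vert^2=\Vert\Phi\Vert^2$, which says this map is a fiberwise $L^2$-isometry; it is moreover a (conjugate-linear) fiberwise bijection carrying $0$ to $0$. Composing the embedding of Corollary \ref{bounded} with this isometry yields an embedding of $QF(S)$ into the holomorphic tangent bundle of $\cal T(S)$, whose fibers are the harmonic Beltrami differentials $B(X)$. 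Because the isometry fixes the zero section, the image is again a neighborhood of the zero section; because it preserves $L^2$-norms fiber by fiber, the ball of radius $9\pi(g-1)$ in $Q(Y)$ maps onto the ball of the same radius in $B(Y)$, so the enclosing bound is unchanged.

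I expect no substantial obstacle here, since the argument is merely the composition of an embedding with an isometric bundle isomorphism; the only care needed is bookkeeping. I would verify that the identification of $\cal Q(\bar S)$ with the cotangent bundle and the conjugate-linearity of $\Phi\mapsto\beta_\Phi$ are recorded correctly, and that the indexing over $\cal T(\bar S)$ in Corollary \ref{bounded} is matched consistently with the indexing over $\cal T(S)$ used in the present statement. This last point is harmless, as the two properties that actually drive the conclusion — the fiberwise isometry and the preservation of the zero section — are symmetric in $S$ and $\bar S$.
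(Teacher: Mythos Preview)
Your proposal is correct and follows exactly the paper's approach: the paper simply writes ``By Corollary \ref{bounded}, we get'' and states the result, relying implicitly on the just-displayed $L^2$-isometry $\Phi\mapsto\beta_\Phi$ to transport the embedding and the radius bound from $\cal Q(\bar S)$ to the tangent bundle. Your added bookkeeping remarks about conjugate-linearity and the $\cal T(\bar S)$ versus $\cal T(S)$ indexing are accurate and merely make explicit what the paper leaves tacit.
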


\section{Griffiths negativity and K\"ahler metric on the holomorphic vector bundles}

\subsection{Griffiths negativity}
As elaborated above the space  $QF(S)$
can be realized as an open set in the tangent bundle
of $\mathcal T(S)$, and we shall construct metrics
on $QF(S)$ using some general constructions.
For that purpose we recall the notion of Griffiths
positivity. Identifying $\cal P(S)$ with the holomorphic vector bundle
$\cal Q(S)$ whose fiber over $Y\in \cal T(S)$ is $Q(Y)$, one can give
a mapping class group invariant K\"ahler metric on $\cal Q(S)$ as
follows. By a theorem of Berndtsson \cite{Bo}, one can show that $\cal
Q(S)$ is Griffiths positive. See \cite{KZ} for a proof.
Hence its dual bundle $\cal B(S)=\cal Q^*(S)$, which is a tangent
bundle of Teichm\"uller space whose fiber is the set of Beltrami
differentials, is Griffiths negative. We fix in the rest of the paper
this realization of $QF(S)$ as
a subset in $\cal Q^*(S)$.
The $L^2$-norm of a Beltrami differential $w=\mu(v) \frac{ d\bar v}{d v}$ is given by
$$||w||^2=(w,w)=\int_Y |\mu(v)|^2 \rho(v)^2|dv|^2$$ where $v$ is a local holomorphic coordinate on $Y$ and $\rho(v)|dv|$ is a hyperbolic metric on $Y$.  Here $(,)$ denotes the $L^2$ inner product over each fiber and $||\cdot||$ denotes its associated norm.

  The K\"ahler metric depending on a constant $k>0$ and a K\"ahler metric on $\cal T(S)$, is constructed on $\cal B(S)=\cal Q^*(S)$ via K\"ahler potential
$$\Phi(w)=||w||^2 + k \pi^* \psi(w),$$ where $w$ is an element in the fiber, $\psi$ is a K\"ahler potential on $\cal T(S)$ and $\pi:\cal B(S)\ra \cal T(S)$ is a projection.

In  local holomorphic coordinates $(z,x)$ around $w_0$, where $z=(z_1,\cdots,z_{3g-3})$ is  local holomorphic coordinates around $\pi(w_0)=z_0$, and $w=\sum x^\alpha e_\alpha(z)$ with respect to  local holomorphic sections $e_\alpha$, for a holomorphic tangent vector at $w$ $T=u+v$ with a canonical decomposition into $\cal T(S)$ direction $u$ and vertical fiber direction $v$, the norm of $T$ with respect to the K\"ahler metric $g$ defined by the K\"ahler potential $\Phi$ is given by
$$||T||_\Phi^2=\bar\partial_T \partial_T \Phi(w)= - (R(u,\bar u)w,w)+ (\cal D_u w+ v, \cal D_u w+v)+k \bar\partial_u
 \partial_u \psi >0,$$ where $R$ is a curvature of the Chern
 connection $\nabla$ on $\cal B(S)$ and $\nabla=\cal D +\bar\partial$
 is a decomposition into $(1,0)$ and $(0,1)$ part of the connection.
 See \cite{KZ} for details.

Since this construction is general, we treat this construction as general as possible in the following subsection.

\subsection{K\"ahler metrics on Griffiths negative vector bundles}

Let $\pi: E\to M$ be a holomorphic vector bundle of rank $r$ over a complex manifold $M$, $\dim M=n$. Let $\{e_i\}_{i=1}^r$ be a local holomorphic frame of $E$ and $\{z^\alpha\}_{\alpha=1}^n$ be local coordinates of $M$.
 Let $G$ be a Hermitian metric on $E$ with Griffiths negative curvature, that is 
\begin{align*}
R_{i\b{j}\alpha\b{\beta}}v^i\b{v}^j\xi^\alpha\b{\xi}^{\beta}<0	
\end{align*}
for any non-zero vectors $v=v^ie_i\in E$ and $\xi=\xi^\alpha\frac{\p}{\p z^\alpha}\in TM$. Here 
\begin{align*}
	R_{i\b{j}\alpha\b{\beta}}=-\p_\alpha\p_{\b{\beta}}G_{i\b{j}}+G^{k\b{l}}\p_\alpha G_{i\b{l}}\p_{\b{\beta}}G_{k\b{j}}
\end{align*}
denotes the Chern curvature tensor of the Hermitian metric $G$. With respect to the local holomorphic frame $\{e_i\}_{i=1}^r$, the complex manifold $E$ is equipped with the following holomorphic coordinates 
\begin{align*}
(z;v)=(z^1,\cdots, z^n; v^1,\cdots, v^r),	
\end{align*}
representating the point $v=v^ie_i\in E$. 
The Hermitian metric $G$ also gives the norm square function
on $E$. By abuse of notation, we also denote it by $G$, i.e.
the function
\begin{align*}
v\in E\mapsto G(v)=G(v,\b{v})=G_{i\b{j}}v^i\b{v}^j.	
\end{align*}
Then $\p\b{\p}G$ is a $(1,1)$-form on $E$. 
\begin{lemma}\label{lemma1}
Denote $\delta v^i:=dv^i+G_{\alpha\b{l}}G^{\b{l}i}dz^\alpha$. Then
	\begin{align*}
	\p\b{\p}G=-R_{i\b{j}\alpha\b{\beta}}v^i\b{v}^jdz^\alpha\wedge d\b{z}^\beta+G_{i\b{j}}\delta v^i\wedge \delta \b{v}^j.	
	\end{align*}
\end{lemma}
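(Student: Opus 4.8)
The plan is to prove the identity by a direct coordinate computation on the total space $E$, treating $G(v)=G_{i\bar j}v^i\bar v^j$ as an honest function of the holomorphic coordinates $(z^\alpha,v^i)$ and expanding each side. First I would compute $\bar\partial G$ and then apply $\partial$. Since the $v^i$ are holomorphic fibre coordinates, $\bar\partial v^i=0$, and one finds $\bar\partial G=(\partial_{\bar\beta}G_{i\bar j})v^i\bar v^j\,d\bar z^\beta+G_{i\bar k}v^i\,d\bar v^k$. Applying $\partial$ then produces exactly four groups of terms: a pure base term $\partial_\alpha\partial_{\bar\beta}G_{i\bar j}\,v^i\bar v^j\,dz^\alpha\wedge d\bar z^\beta$; two mixed terms of type $dv^i\wedge d\bar z^\beta$ and $dz^\alpha\wedge d\bar v^k$ with coefficients $\partial_{\bar\beta}G_{i\bar j}\bar v^j$ and $\partial_\alpha G_{i\bar k}v^i$ respectively; and the pure fibre term $G_{i\bar k}\,dv^i\wedge d\bar v^k$. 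This is the left-hand side, fully expanded.

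Next I would expand the right-hand side. Writing $\delta v^i=dv^i+G^{i\bar l}(\partial_\alpha G_{k\bar l})v^k\,dz^\alpha$ — this is the $(1,0)$ Chern connection form $\omega^i_k=G^{i\bar l}\partial G_{k\bar l}$ applied to $v$, so that $\delta v^i$ is the projection onto the vertical subbundle — together with its conjugate $\delta\bar v^j=d\bar v^j+G^{m\bar j}(\partial_{\bar\beta}G_{m\bar n})\bar v^n\,d\bar z^\beta$, the product $G_{i\bar j}\,\delta v^i\wedge\delta\bar v^j$ splits into four pieces. Using the standard contraction identities $G_{i\bar j}G^{m\bar j}=\delta_i^m$ and $G_{i\bar j}G^{i\bar l}=\delta^{\bar l}_{\bar j}$, the two cross pieces collapse to precisely the two mixed terms found on the left, while the $dv\wedge d\bar v$ piece is exactly $G_{i\bar j}\,dv^i\wedge d\bar v^j$, matching the fibre term.

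The only nontrivial matching is the $dz\wedge d\bar z$ part, and this is where the curvature enters — I expect it to be the one genuinely load-bearing step. The quadratic piece of $G_{i\bar j}\,\delta v^i\wedge\delta\bar v^j$, namely the product of the two connection-form corrections, contracts via $G_{i\bar j}G^{i\bar l}G^{m\bar j}=G^{m\bar l}$ to $G^{m\bar l}(\partial_\alpha G_{i\bar l})(\partial_{\bar\beta}G_{m\bar j})v^i\bar v^j\,dz^\alpha\wedge d\bar z^\beta$. This is exactly the second term of the Chern curvature $R_{i\bar j\alpha\bar\beta}=-\partial_\alpha\partial_{\bar\beta}G_{i\bar j}+G^{k\bar l}\partial_\alpha G_{i\bar l}\partial_{\bar\beta}G_{k\bar j}$, contracted with $v^i\bar v^j$. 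Hence when this piece is added to $-R_{i\bar j\alpha\bar\beta}v^i\bar v^j\,dz^\alpha\wedge d\bar z^\beta$, the $G^{k\bar l}\partial G\,\partial G$ contributions cancel, leaving precisely $\partial_\alpha\partial_{\bar\beta}G_{i\bar j}\,v^i\bar v^j\,dz^\alpha\wedge d\bar z^\beta$, which is the base term from the first step. Matching all four groups then yields the claimed formula.

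The computation is otherwise entirely mechanical; the only real care needed is in bookkeeping the index contractions and the signs coming from reordering the wedge products. Conceptually, the statement is just the familiar fact that $\partial\bar\partial$ of the norm-square function on a Hermitian holomorphic bundle decomposes into a horizontal part carrying the Chern curvature and a vertical part carrying the fibre metric, with the forms $\delta v^i$ realizing the horizontal–vertical splitting of $TE$ determined by the Chern connection.
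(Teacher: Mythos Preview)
Your proposal is correct and follows essentially the same approach as the paper: a direct coordinate computation expanding both sides and using the definition of the Chern curvature $R_{i\bar j\alpha\bar\beta}=-\partial_\alpha\partial_{\bar\beta}G_{i\bar j}+G^{k\bar l}\partial_\alpha G_{i\bar l}\partial_{\bar\beta}G_{k\bar j}$ to match the $dz^\alpha\wedge d\bar z^\beta$ terms. The paper simply runs the computation in the reverse direction (starting from the right-hand side and simplifying to $\partial\bar\partial G$), but the content is identical.
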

\begin{proof}
This follows by a direct computation,
	\begin{align*}
		&\quad -R_{i\b{j}\alpha\b{\beta}}v^i\b{v}^jdz^\alpha\wedge d\b{z}^\beta+G_{i\b{j}}\delta v^i\wedge \delta \b{v}^j\\
		&=-(-\p_\alpha\p_{\b{\beta}}G_{i\b{j}}+G^{\b{l}k}\p_\alpha G_{i\b{l}}\p_{\b{\beta}}G_{k\b{j}})v^i\b{v}^jdz^\alpha\wedge d\b{z}^\beta\\
		&\quad+ G_{i\b{j}}(dv^i+G_{\alpha\b{l}}G^{\b{l}i}dz^\alpha)\wedge (d\b{v}^j+G_{\b{\beta}k}G^{\b{j} k}d\b{z}^\beta)\\
		&=G_{\alpha\b{\beta}}dz^\alpha\wedge d\b{z}^\beta+G_{\alpha\b{j}}dz^\alpha\wedge d\b{v}^j+G_{i\b{\beta}}dv^i\wedge d\b{z}^j+G_{i\b{j}}dv^i\wedge d\b{v}^j\\
		&=\p\b{\p}G.
	\end{align*}
\end{proof}
Now we assume $(M,\omega=\sqrt{-1}g_{\alpha\b{\beta}}dz^\alpha\wedge
d\b{z}^\beta)$ is a K\"ahler manifold and
 $(E,G)$ is Griffiths negative. Define the $(1, 1)$-form
\begin{align}\label{metric}
	\Omega:=\pi^*\omega+\sqrt{-1}\p\b{\p}G.
\end{align}
Lemma \ref{lemma1} then implies that $\Omega$
is a K\"ahler metric on $E$.
In terms of local coordinates $\Omega$ is
\begin{align}\label{metric1}
\Omega=\sqrt{-1}\Omega_{\alpha\b{\beta}}dz^\alpha\wedge d\b{z}^\beta+\sqrt{-1}G_{i\b{j}}\delta v^i\wedge \delta \b{v}^j,
\end{align}
where 
\begin{align}\label{1.2}
	\Omega_{\alpha\b{\beta}}:=-R_{i\b{j}\alpha\b{\beta}}v^i\b{v}^j+g_{\alpha\b{\beta}}
\end{align}
is a positive definite matrix.
The differential  $\p G$ of $G$
is a globally defined one-form on $E$, and its norm square is $G$.
Indeed,
\begin{align*}
\p G=G_{\alpha}dz^\alpha+G_i dv^i=G_i(dv^i+G_{\alpha\b{l}}G^{\b{l}i}dz^\alpha)=G_i\delta v^i.	
\end{align*}
Its norm squre with respect to the metric $\Omega$ is 
\begin{align*}
\|\p G\|^2=G_iG_{\b{j}}G^{\b{j} i}.
\end{align*}
Since $G=G_{i\b{j}}v^i\b{v}^j$, so $G_i=G_{i\b{j}}\b{v}^j$ and
\begin{align*}
	G_iG_{\b{j}}G^{\b{j} i}=G_{i\b{l}}\b{v}^lG_{k\b{j}}v^kG^{\b{j} i}=G_{k\b{l}}v^k\b{v}^l=G,
\end{align*}
which yields that 
\begin{align*}
\|\p G\|^2=G_iG_{\b{j}}G^{\b{j} i}=G,
\end{align*}
which is independent of the metric $\omega$.
\begin{prop}
The norm of the one-form $\p G$ with respect to $\Omega$ is given by 
\begin{align*}
\|\p G\|^2=G	
\end{align*}
	for any metric $\omega$ on $M$. In particular, 
	\begin{align*}
	\|\p G\|^2<R	
	\end{align*}
on the disk bundle $S_R=\{(z,v)\in E| G(z,v)<R\}$.
\end{prop}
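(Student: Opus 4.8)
The plan is to read the norm off directly from the block structure of $\Omega$ that is already exhibited in \eqref{metric1}. The decisive observation is that the adapted coframe $\{dz^\alpha,\delta v^i\}$ diagonalizes the K\"ahler metric: in \eqref{metric1} there are no mixed terms $dz^\alpha\wedge\delta\bar v^j$ or $\delta v^i\wedge d\bar z^\beta$, so with respect to this coframe the Hermitian matrix representing $\Omega$ is the block-diagonal matrix with horizontal block $\Omega_{\alpha\bar\beta}$ from \eqref{1.2} and vertical block equal to the fiber metric $G_{i\bar j}$. Consequently the inverse matrix, which is what computes norms of $(1,0)$-forms, is itself block-diagonal, with vertical block $G^{\bar j i}$.

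Next I would invoke the identity $\partial G=G_i\,\delta v^i$ established just above the statement. The point is that, expressed in the coframe $\{dz^\alpha,\delta v^i\}$, the one-form $\partial G$ has vanishing horizontal component and only the vertical components $G_i$: the connection correction $G_{\alpha\bar l}G^{\bar l i}dz^\alpha$ packaged inside $\delta v^i$ has exactly absorbed the naive term $G_\alpha\,dz^\alpha$. Because $\partial G$ is purely vertical in this coframe, pairing it with itself only calls on the vertical block of the inverse metric, giving
\begin{align*}
\|\partial G\|^2=G_iG_{\bar j}G^{\bar j i},
\end{align*}
with no dependence whatsoever on the horizontal block $\Omega_{\alpha\bar\beta}$, and hence no dependence on the chosen base metric $\omega$. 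This is precisely the asserted metric-independence.

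Finally I would substitute $G_i=G_{i\bar j}\bar v^j$ and $G_{\bar j}=G_{k\bar j}v^k$, contract using $G^{\bar j i}G_{i\bar l}=\delta^{\bar j}_{\bar l}$, and collapse the expression to $G_{k\bar l}v^k\bar v^l=G$, yielding the claimed identity $\|\partial G\|^2=G$. The ``in particular'' statement is then immediate: on the disk bundle $S_R=\{(z,v)\in E\mid G(z,v)<R\}$ one has $\|\partial G\|^2=G<R$ by the very definition of $S_R$.

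As for the main obstacle, there is essentially no hard analytic step here; the entire content lies in correctly identifying $\delta v^i$ as the right coframe. The one place to be careful is the claim that the norm of a purely vertical form uses only $G^{\bar j i}$, which is valid precisely because this coframe diagonalizes $\Omega$ so that the block $\Omega_{\alpha\bar\beta}$ never enters. Working instead in the naive coframe $\{dz^\alpha,dv^i\}$ would produce off-diagonal blocks and a considerably messier inversion, though of course the final answer would be the same.
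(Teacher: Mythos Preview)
Your proposal is correct and follows essentially the same approach as the paper: the paper likewise rewrites $\partial G=G_i\,\delta v^i$, reads off $\|\partial G\|^2=G_iG_{\bar j}G^{\bar j i}$ from the block-diagonal form \eqref{metric1}, and then contracts using $G_i=G_{i\bar j}\bar v^j$ to obtain $G$. If anything, your account is slightly more explicit about why only the vertical block $G^{\bar j i}$ enters and why this forces independence from $\omega$, but the argument is the same.
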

As a corollary, we obtain
\begin{cor}\label{cor1}
If $\omega$ is $d$-bounded, $\omega=d\beta$ for some (locally defined)
bounded one-form
$\beta$, then $\Omega$ is also $d$-bounded
on any bounded domain of $E$ with
$\Omega=d(\p G+\pi^*\beta)$ and bounded one-form
$\p G+\pi^*\beta$.
\end{cor}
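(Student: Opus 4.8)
The plan is to build an explicit primitive for $\Omega$ by splitting it, following \eqref{metric}, as $\Omega=\pi^*\omega+\sqrt{-1}\,\partial\bar\partial G$ and finding a bounded primitive for each summand; the boundedness will then be read off from the preceding Proposition together with Griffiths negativity.

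First I would dispose of the base term. Since $\omega=d\beta$ locally, naturality of the pullback gives at once $\pi^*\omega=\pi^*(d\beta)=d(\pi^*\beta)$, so $\pi^*\beta$ is a locally defined primitive of $\pi^*\omega$. For the fiber term the key point, already recorded above, is that $\partial G=G_i\,\delta v^i$ is a \emph{globally} defined one-form on $E$. Using $\partial^2=0$ and $\bar\partial\partial=-\partial\bar\partial$ I compute $d(\partial G)=\bar\partial\partial G=-\partial\bar\partial G$, so that $-\sqrt{-1}\,\partial G$ is a global primitive of $\sqrt{-1}\,\partial\bar\partial G$. Hence $\Omega=d(-\sqrt{-1}\,\partial G+\pi^*\beta)$, a primitive differing from the stated $\partial G+\pi^*\beta$ only by a unimodular factor on the first term and therefore having the same pointwise norm.

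It then remains to bound $\partial G+\pi^*\beta$ in the $\Omega$-norm on a bounded domain, say where $G\le R$. The first term needs nothing new: the preceding Proposition gives exactly $\|\partial G\|^2=G\le R$. For $\pi^*\beta$ I would use Griffiths negativity through \eqref{1.2}: because $-R_{i\bar j\alpha\bar\beta}v^i\bar v^j\ge 0$, the horizontal block obeys $\Omega_{\alpha\bar\beta}=g_{\alpha\bar\beta}-R_{i\bar j\alpha\bar\beta}v^i\bar v^j\ge g_{\alpha\bar\beta}$. In the coframe $\{dz^\alpha,\delta v^i\}$ of \eqref{metric1} the metric $\Omega$ is block diagonal and $\pi^*\beta$ has no $\delta v$-component, so its norm is measured purely by the inverse horizontal block $\Omega^{\alpha\bar\beta}\le g^{\alpha\bar\beta}$, giving $\|\pi^*\beta\|_\Omega\le\|\beta\|_\omega$, which is bounded by hypothesis.

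The main obstacle I anticipate is precisely this last comparison for $\pi^*\beta$: one must verify that, although $\delta v^i$ mixes $dz^\alpha$ and $dv^i$, the covector $\pi^*\beta$ remains purely horizontal in the coframe $\{dz^\alpha,\delta v^i\}$, so that its $\Omega$-norm really is controlled by $\Omega^{\alpha\bar\beta}$ alone---this is the one place where the block form of $\Omega$ in \eqref{metric1} and the sign of the Griffiths curvature enter essentially. Granting this, $\partial G+\pi^*\beta$ is bounded on every bounded domain and $\Omega$ is $d$-bounded.
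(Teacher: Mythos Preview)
Your argument is correct and is precisely the natural proof; the paper itself offers no explicit argument for this corollary, stating it simply as an immediate consequence of the preceding Proposition $\|\partial G\|^2=G$. You actually supply more than the paper does: the bound $\|\pi^*\beta\|_\Omega\le\|\beta\|_\omega$, obtained from $\Omega_{\alpha\bar\beta}\ge g_{\alpha\bar\beta}$ via Griffiths negativity and the block form \eqref{metric1}, is a necessary step that the paper leaves implicit, and your remark about the harmless unimodular factor $-\sqrt{-1}$ in the primitive is a valid correction of a minor imprecision in the stated formula $\Omega=d(\partial G+\pi^*\beta)$.
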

Specifying to the space
$QF(S)$ we find that
new K\"ahler metric on $QF(S)$ has a bounded primitive if 
the K\"ahler form on $\cal T(S)$ has a bounded primitive.

\section{Curvature of the new K\"ahler metric}

In this section, we will calculate the curvature of the K\"ahler metric $\Omega$ (\ref{metric}). By  \cite[Section 2]{CW}, the Hermitian metric $G$ gives a decomposition on the tangent bundle $TE$ of $E$, i.e. $$TE=\mc{H}\oplus \mc{V},$$
where the horizontal subbundle $\mc{H}$ and vertical subbundle $\mc{V}$ are given by 
$$\mc{H}=\text{Span}_{\mb{C}}\left\{\frac{\delta}{\delta z^\alpha}:=\frac{\p}{\p z^\alpha}-G_{\alpha\b{l}}G^{\b{l}i}\frac{\p}{\p v^i}, 1\leq \alpha\leq n\right\},\quad \mc{V}=\text{Span}_{\mb{C}}\left\{\frac{\p}{\p v^i},1\leq i\leq r\right\}.$$
By duality, the cotangent bundle $T^*E=\mc{H}^*\oplus \mc{V}^*$ with 
\begin{align*}
\mc{H}^*=\text{Span}_{\mb{C}}\left\{dz^\alpha,1\leq \alpha\leq n\right\},\quad \mc{V}^*=\text{Span}_{\mb{C}}\left\{\delta v^i=dv^i+G_{\alpha\b{l}}G^{\b{l}i}dz^\alpha, 1\leq i\leq r\right\}	. 
\end{align*}
 Let $\n=\n'+\b{\p}$ denote the Chern connection of $\Omega$ and  
\begin{align*}
R^{\Omega}=\n^2=\n'\circ \b{\p}+\b{\p}\circ\n'\in A^{1,1}(E, \text{End}(TE))	
\end{align*}
denote the Chern curvature of $\n$. Then 
\begin{aligns}\label{1.3}
\n'\left(\frac{\delta}{\delta z^\alpha}\right) &=	\left\langle\n'\left(\frac{\delta}{\delta z^\alpha}\right),\frac{\delta}{\delta z^\beta}\right\rangle \Omega^{\b{\beta}\gamma}\frac{\delta}{\delta z^\gamma}+\left\langle\n'\left(\frac{\delta}{\delta z^\alpha}\right),\frac{\p}{\p v^j}\right\rangle G^{\b{j}i}\frac{\p}{\p v^i}\\
&=\left(\p \Omega_{\alpha\b{\beta}}-\left\langle\frac{\delta}{\delta z^\alpha},\b{\p}\left(\frac{\delta}{\delta z^\beta}\right)\right\rangle\right)\Omega^{\b{\beta}\gamma}\frac{\delta}{\delta z^\gamma}\\
&=\p \Omega_{\alpha\b{\beta}}\Omega^{\b{\beta}\gamma}\frac{\delta}{\delta z^\gamma},
\end{aligns}
where the last equality holds since $\b{\p}\left(\frac{\delta}{\delta z^\beta}\right)$ is vertical, and 
\begin{aligns}\label{1.4}
\n'\left(\frac{\p}{\p v^i}\right)&=\left\langle\n'\left(\frac{\p}{\p v^i}\right),\frac{\delta}{\delta z^\beta}\right\rangle \Omega^{\b{\beta}\gamma}\frac{\delta}{\delta z^\gamma}+\left\langle\n'\left(\frac{\p}{\p v^i}\right),\frac{\p}{\p v^j}\right\rangle G^{\b{j}k}\frac{\p}{\p v^k}\\
&=-\left\langle\frac{\p}{\p v^i},\b{\p}\left(\frac{\delta}{\delta z^\beta}\right)\right\rangle \Omega^{\b{\beta}\gamma}\frac{\delta}{\delta z^\gamma}+\p G_{i\b{j}} G^{\b{j}k}\frac{\p}{\p v^k}\\
&=G_{i\b{j}}\p(G_{k\b{\beta}}G^{\b{j} k})\Omega^{\b{\beta}\gamma}\frac{\delta}{\delta z^\gamma}+\p G_{i\b{j}} G^{\b{j}k}\frac{\p}{\p v^k}\\
&=G_{i\b{j}}\p_\alpha(G_{k\b{\beta}}G^{\b{j} k})\Omega^{\b{\beta}\gamma}dz^\alpha\otimes\frac{\delta}{\delta z^\gamma}+\p_\alpha G_{i\b{j}} G^{\b{j}k}dz^\alpha\otimes\frac{\p}{\p v^k},
\end{aligns}
where the last equality follows from the fact $G_{i\b{j}k}=0$ since $G_{i\bar j}$ is
a metric along vertical directions.
From (\ref{1.3}) and (\ref{1.4}), the curvature $R^{\Omega}$ is 
\begin{aligns}
R^\Omega\left(\frac{\delta}{\delta z^\alpha}\right)	&=(\n'\circ \b{\p}+\b{\p}\circ\n')\left(\frac{\delta}{\delta z^\alpha}\right)\\
&=\n'\left(-\b{\p}(G_{\alpha\b{l}}G^{\b{l}i})\frac{\p}{\p v^i}\right)+\b{\p}\left(\p \Omega_{\alpha\b{\beta}}\Omega^{\b{\beta}\gamma}\frac{\delta}{\delta z^\gamma}\right)\\
&=\left(-\p\b{\p}(G_{\alpha\b{l}}G^{\b{l}k})-\p G_{i\b{j}}G^{\b{j}k}\wedge \b{\p}(G_{\alpha\b{l}}G^{\b{l}i})+\p\Omega_{\alpha\b{\beta}}\Omega^{\b{\beta}\gamma}\wedge\b{\p}(G_{\gamma\b{l}}G^{\b{l}k})\right)\frac{\p}{\p v^k}\\
&\quad+\left(\b{\p}(\p\Omega_{\alpha\b{\beta}}\Omega^{\b{\beta}\gamma})-\p(G_{k\b{\beta}}G^{\b{j} k})G_{i\b{j}}\Omega^{\b{\beta}\gamma}\wedge \b{\p}(G_{\alpha\b{l}}G^{\b{l}i})\right)\frac{\delta}{\delta z^\gamma},
\end{aligns}
and 
\begin{aligns}\label{1.5}
R^\Omega\left(\frac{\p}{\p v^i}\right)&=\b{\p}\circ\n'	\left(\frac{\p}{\p v^i}\right)\\
&=\b{\p}
\left(G_{i\b{j}}\p(G_{k\b{\beta}}
  G^{\b{j} k})\Omega^{\b{\beta}\gamma}\frac{\delta}{\delta z^\gamma}+\p G_{i\b{j}} G^{\b{j}k}\frac{\p}{\p v^k}\right)\\
&=\b{\p}(G_{i\b{j}}\p(G_{ k\b{\beta}}G^{\b{j}k})\Omega^{\b{\beta}\gamma})\frac{\delta}{\delta z^\gamma}\\
&\quad+\left(G_{i\b{j}}\p(G_{k\b{\beta}}G^{\b{j}k})\Omega^{\b{\beta}\gamma}\wedge \b{\p}(G_{\gamma\b{l}}G^{\b{l}k})+\b{\p}(\p G_{i\b{j}} G^{\b{j}k})\right)\frac{\p}{\p v^k}.
\end{aligns}
Therefore, we obtain
\begin{prop}\label{prop1}
	The Chern curvature $R^{\Omega}$ satisfies 
	\begin{itemize}
	\item[(i)] $\langle R^\Omega\left(\frac{\p}{\p v^i}\right),\frac{\p}{\p v^j}\rangle=(R_{i\b{l}\alpha\b{\beta}}R_{k\b{j}\gamma\b{\sigma}}v^k\b{v}^l\Omega^{\b{\beta}\gamma}+R_{i\b{j}\alpha\b{\sigma}})dz^\alpha\wedge d\b{z}^\sigma$;
	\item[(ii)]	$\langle R^{\Omega}(\frac{\delta}{\delta z^\alpha}),\frac{\delta}{\delta z^\beta}\rangle=\b{\p}(\p\Omega_{\alpha\b{\sigma}}\Omega^{\b{\sigma}\gamma})\Omega_{\gamma\b{\beta}}-R_{p\b{l}\gamma\b{\beta}}R_{k\b{q}\alpha\b{\sigma}}v^k\b{v}^lG^{\b{q} p}dz^\gamma\wedge d\b{z}^\sigma$.
	\end{itemize}
\end{prop}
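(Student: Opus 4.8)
The statement is purely computational: the two displayed formulas immediately preceding the proposition already express $R^{\Omega}\!\left(\frac{\delta}{\delta z^\alpha}\right)$ and $R^{\Omega}\!\left(\frac{\partial}{\partial v^i}\right)$ in the adapted frame $\left\{\frac{\delta}{\delta z^\alpha},\frac{\partial}{\partial v^i}\right\}$, so it remains only to pair these against the dual frame with respect to $\Omega$ and to simplify. The plan is to use the orthogonal splitting $TE=\mathcal{H}\oplus\mathcal{V}$ recorded in \eqref{metric1}, under which $\left\langle\frac{\partial}{\partial v^i},\frac{\partial}{\partial v^j}\right\rangle=G_{i\bar{j}}$, $\left\langle\frac{\delta}{\delta z^\alpha},\frac{\delta}{\delta z^\beta}\right\rangle=\Omega_{\alpha\bar{\beta}}$, and horizontal vectors are orthogonal to vertical ones. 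Consequently, for (i) only the $\frac{\partial}{\partial v^k}$-component of $R^{\Omega}\!\left(\frac{\partial}{\partial v^i}\right)$ contributes after contraction with $G_{k\bar{j}}$, while for (ii) only the $\frac{\delta}{\delta z^\gamma}$-component of $R^{\Omega}\!\left(\frac{\delta}{\delta z^\alpha}\right)$ survives after contraction with $\Omega_{\gamma\bar{\beta}}$.

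Before contracting, I would record the two key identities that convert every derivative of the connection coefficients $G_{\alpha\bar{l}}G^{\bar{l}i}$ (the symbols appearing in the horizontal lift and in the displayed connection forms) into the bundle curvature. Using $G_{i\bar{j}k}=0$, the rule $\partial_{\bar\sigma}G^{\bar{l}k}=-G^{\bar{l}p}\partial_{\bar\sigma}G_{p\bar{q}}G^{\bar{q}k}$, and the defining formula for $R_{i\bar{j}\alpha\bar{\beta}}$, one gets
\begin{align*}
\bar\partial\!\left(G_{\alpha\bar{l}}G^{\bar{l}i}\right)&=-R_{k\bar{l}\alpha\bar\sigma}v^k G^{\bar{l}i}\,d\bar{z}^\sigma,\\
\partial\!\left(G_{k\bar\beta}G^{\bar{j}k}\right)&=-R_{k\bar{q}\gamma\bar\beta}\bar{v}^q G^{\bar{j}k}\,dz^\gamma,
\end{align*}
together with the companion relation $\bar\partial\!\left(\partial G_{i\bar{m}}G^{\bar{m}k}\right)G_{k\bar{j}}=R_{i\bar{j}\alpha\bar\sigma}\,dz^\alpha\wedge d\bar{z}^\sigma$, where the sign is produced by the reordering $d\bar{z}^\sigma\wedge dz^\alpha=-dz^\alpha\wedge d\bar{z}^\sigma$.

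For (i), substituting these identities into the vertical part of \eqref{1.5} and contracting with $G_{k\bar{j}}$, the two factors of $-R$ combine (the product of the two minus signs being positive) into the term $R_{i\bar{l}\alpha\bar\beta}R_{k\bar{j}\gamma\bar\sigma}v^k\bar{v}^l\Omega^{\bar\beta\gamma}$, while the companion relation supplies the remaining $R_{i\bar{j}\alpha\bar\sigma}$; the contractions $G_{i\bar{m}}G^{\bar{m}p}=\delta_i^p$ and $G^{\bar{l}k}G_{k\bar{j}}=\delta^{\bar{l}}_{\bar{j}}$ clean up the indices. For (ii), the first summand $\bar\partial\!\left(\partial\Omega_{\alpha\bar\sigma}\Omega^{\bar\sigma\gamma}\right)$ is kept verbatim after contraction with $\Omega_{\gamma\bar\beta}$, giving the base-type curvature term. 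In the second summand the crucial simplification is $\Omega^{\bar\sigma\gamma}\Omega_{\gamma\bar\beta}=\delta^{\bar\sigma}_{\bar\beta}$, which removes the metric $\Omega$ entirely; the two curvature identities then produce $-R_{p\bar{l}\gamma\bar\beta}R_{k\bar{q}\alpha\bar\sigma}v^k\bar{v}^l$, and the collapse $G^{\bar{j}k}G_{i\bar{j}}G^{\bar{l}i}=G^{\bar{l}k}$ leaves exactly the factor $G^{\bar{q}p}$ in the stated formula.

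I expect the only real difficulty to be \emph{notational bookkeeping} rather than any conceptual obstacle: the displayed formulas reuse the summation letters $j,k,\beta$ both as free and as contracted indices, so I would first relabel all dummy indices cleanly, and I would keep careful track of which derivatives act on the Hermitian metric $G_{i\bar{j}}(z)$ versus on the norm function $G(z,v)=G_{i\bar{j}}v^i\bar{v}^j$, as well as of the $(1,0)$/$(0,1)$ wedge-ordering signs. The short conceptual point is the derivation of the two curvature identities above; once these are in place, both formulas follow by direct contraction.
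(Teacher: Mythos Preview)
Your proposal is correct and follows essentially the same route as the paper. The paper's proof also reduces everything to the single key identity
\[
G_{i\bar q}\,\partial\!\bigl(G_{k\bar\beta}G^{\bar q k}\bigr)=-R_{i\bar l\alpha\bar\beta}\,\bar v^{l}\,dz^\alpha,
\]
which is your second displayed identity contracted with $G_{i\bar j}$, and then finishes with ``similar calculations''; your write-up is in fact a bit more explicit in that you isolate the conjugate identity and the companion relation $\bar\partial(\partial G_{i\bar m}G^{\bar m k})G_{k\bar j}=R_{i\bar j\alpha\bar\sigma}\,dz^\alpha\wedge d\bar z^\sigma$ separately before contracting.
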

\begin{proof}
(i) We compute the inner product according to (\ref{1.5}),
\begin{align*}
	\left\langle R^\Omega\left(\frac{\p}{\p v^i}\right),\frac{\p}{\p v^j}\right\rangle &=G_{i\b{q}}\p(G_{k\b{\beta}}G^{\b{q}k})\Omega^{\b{\beta}\gamma}\wedge \b{\p}(G_{\gamma\b{l}}G^{\b{l}k})G_{k\b{j}}+\b{\p}(\p G_{i\b{q}} G^{\b{q}k})G_{k\b{j}}.
\end{align*}
Note that $G_{i\bar j} G^{k\bar j}=\delta_i^k$, hence
$$\p_\alpha(G^{k\bar q}G_{i\bar q})=0=\p_\alpha(G^{k\bar q})G_{i\bar q}+ G^{k\bar q}G_{\alpha i \bar q}.$$
Then
\begin{align*}
G_{i\b{q}}\p(G_{\b{\beta}k}G^{k\b{q}})=&\left[G_{i\b{q}}(\p_\alpha(G_{\b{\beta}k})G^{k\b{q}})+
G_{i\b{q}}(G_{\b{\beta}k}\p_\alpha(G^{k\b{q}}))\right]dz^\alpha\\
=&(G_{i\bar q}G_{\alpha \bar\beta k}G^{k\bar q}-G_{\bar \beta k}G^{k\bar q}G_{\alpha i \bar q})dz^\alpha\\
=& (G_{\alpha\bar \beta i}-G_{\bar \beta k}G^{k\bar q}G_{\alpha i \bar q} )dz^\alpha
\end{align*}
But $R_{i\bar l \alpha\bar\beta}=-\p_\alpha\p_{\b\beta}G_{i\b l}+ G^{k\b j}\p_\alpha G_{i\b j} \p_{\b \beta}G_{k\b l}$
and $G=G(z,v)=G_{i\bar j}(z)v^i \bar v^j$, hence
$$G_i=G_{i\b j}\b v_j, G_{\alpha\b\beta i}=G_{\alpha\b \beta i \b l}\b v^l,$$ and
\begin{align*}
	R_{i\b l\alpha\b \beta}\b v^l &=-G_{\alpha\b\beta i \b l}\b v^l + G^{k\b j}G_{\alpha i \b j}G_{\b \beta k \b l}\b v^l\\
	&=-G_{\alpha\b\beta i}+ G^{k\b j}G_{\alpha i\b j}G_{\b\beta k}\\
	&=-G_{\alpha\b\beta i}+ G^{k\b q}G_{\alpha i\b q}G_{\b\beta k}.
\end{align*}
Finally we get
\begin{align}\label{1.6}
G_{i\b{q}}\p(G_{k\b{\beta}}G^{\b{q} k})=\left(G_{\alpha\b{\beta}k}-G_{k\b{\beta}}G_{i\b{q}\alpha}G^{\b{q} k}	\right)dz^\alpha=-R_{i\b{l}\alpha\b{\beta}}\b{v}^ldz^\alpha.
\end{align}
Similar calculations give
\begin{align*}
\left\langle R^\Omega\left(\frac{\p}{\p v^i}\right),\frac{\p}{\p v^j}\right\rangle=(R_{i\b{l}\alpha\b{\beta}}R_{k\b{j}\gamma\b{\sigma}}v^k\b{v}^l\Omega^{\b{\beta}\gamma}+R_{i\b{j}\alpha\b{\sigma}})dz^\alpha\wedge d\b{z}^\sigma.	
\end{align*}
(ii) Using (\ref{1.6}) we compute
\begin{aligns}
&\quad \left\langle R^{\Omega}\left(\frac{\delta}{\delta z^\alpha}\right),\frac{\delta}{\delta z^\beta}\right\rangle\\
&=\left\langle \left(\b{\p}(\p\Omega_{\alpha\b{\beta}}\Omega^{\b{\beta}\gamma})-
    \p(G_{k\b{\beta}}G^{\b{j} k})G_{i\b{j}}\Omega^{\b{\beta}\gamma}\wedge \b{\p}(G_{\alpha\b{l}}G^{\b{l}i})\right)\frac{\delta}{\delta z^\gamma},\frac{\delta}{\delta z^\beta}\right\rangle\\
&=\b{\p}(\p\Omega_{\alpha\b{\sigma}}\Omega^{\b{\sigma}\gamma})\Omega_{\gamma\b{\beta}}-R_{p\b{l}\gamma\b{\beta}}R_{k\b{q}\alpha\b{\sigma}}v^k\b{v}^lG^{\b{q} p}dz^\gamma\wedge d\b{z}^\sigma.
\end{aligns}
\end{proof}
\begin{rem}
	From (i), when evaluated on a vertical vector,  $\langle R^\Omega\left(\frac{\p}{\p v^i}\right),\frac{\p}{\p v^j}\rangle$ vanishes, that is $\langle R^\Omega(\frac{\p}{\p v^k}, \bullet)\left(\frac{\p}{\p v^i}\right),\frac{\p}{\p v^j}\rangle=0$. 	
\end{rem}
  There exists a canonical holomorphic section of $\mc{V}$, that is 
  $$P=v^i\frac{\p}{\p v^i}\in \mc{O}_E(\mc{V})$$
   which is called the tautological section (see e.g. \cite[Section
   3]{Aikou}). Denote 
   \begin{align*}
   \Psi_{\alpha\b{\beta}}=	-R_{i\b{j}\alpha\b{\beta}}v^i\b{v}^j.
   \end{align*}
   From Proposition \ref{prop1} the $(1, 1)$-form
$   \langle R^\Omega(P),P\rangle$ is
\begin{align*}
  \langle R^\Omega(P),P\rangle
  &=\left\langle R^\Omega\left(\frac{\p}{\p v^i}\right),\frac{\p}{\p v^j}\right\rangle v^i\b{v}^j\\
	&=(R_{i\b{l}\alpha\b{\beta}}R_{k\b{j}\gamma\b{\sigma}}v^k\b{v}^l\Omega^{\b{\beta}\gamma}+R_{i\b{j}\alpha\b{\sigma}})v^i\b{v}^jdz^\alpha\wedge d\b{z}^\sigma\\
	&=\left(\Psi_{\alpha\b{\beta}}\Psi_{\gamma\b{\sigma}}\Omega^{\b{\beta}\gamma}-\Psi_{\alpha\b{\sigma}}\right)dz^\alpha\wedge d\b{z}^{\sigma}.
\end{align*}
For any point $(z, v)$
outside the zero section, i.e in the set
$$E^o:=\{(z,v)\in E; v\neq 0\},$$
the vector  $P(z,v)\neq 0$. So $(\Psi_{\alpha\b{\beta}})$ is a positive definite matrix on  $E^o$ by Griffiths negativity. Since 
\begin{align*}
\Omega_{\alpha\b{\beta}}-\Psi_{\alpha\b{\beta}}=g_{\alpha\b{\beta}}	
\end{align*}
is positive definite,
  so 
  \begin{aligns}\label{1.9}
  	\langle \sqrt{-1}R^\Omega(P),P\rangle &=\sqrt{-1}\left(\Psi_{\alpha\b{\beta}}\Psi_{\gamma\b{\sigma}}\Omega^{\b{\beta}\gamma}-\Psi_{\alpha\b{\sigma}}\right)dz^\alpha\wedge d\b{z}^{\sigma}\\
  	&\leq  \sqrt{-1}\left(\Psi_{\alpha\b{\beta}}\Psi_{\gamma\b{\sigma}}\Psi^{\b{\beta}\gamma}-\Psi_{\alpha\b{\sigma}}\right)dz^\alpha\wedge d\b{z}^{\sigma}=0.
  \end{aligns}
Thus, we obtain
\begin{prop}\label{prop2}
$\langle \sqrt{-1}R^\Omega(P),P\rangle$
is a non-positive $(1,1)$-form on $E$.
\end{prop}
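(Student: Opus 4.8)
The plan is to take the explicit expression for $\langle\sqrt{-1}R^\Omega(P),P\rangle$ that was assembled in (\ref{1.9}) and reduce the claim to an elementary Loewner inequality between positive definite Hermitian matrices. First I would split $E$ into the zero section and its complement $E^o=\{(z,v)\in E;\ v\neq 0\}$. On the zero section $v=0$, the tautological section $P=v^i\frac{\p}{\p v^i}$ vanishes identically, so $\langle\sqrt{-1}R^\Omega(P),P\rangle=0$ there and non-positivity is automatic. On $E^o$, Griffiths negativity of $(E,G)$ guarantees that the Hermitian matrix $\Psi_{\alpha\b\beta}=-R_{i\b j\alpha\b\beta}v^i\b v^j$ is positive definite, hence invertible, which is what makes the rest of the argument run.

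Next I would record the identity $\Omega_{\alpha\b\beta}=\Psi_{\alpha\b\beta}+g_{\alpha\b\beta}$ from (\ref{1.2}). Since $(M,\omega)$ is K\"ahler, $(g_{\alpha\b\beta})$ is positive definite, so as Hermitian matrices $0<\Psi\leq\Omega$ in the Loewner order. Starting from Proposition \ref{prop1}(i) contracted against $P$, the coefficient of the $(1,1)$-form is $H_{\alpha\b\sigma}=\Psi_{\alpha\b\beta}\Psi_{\gamma\b\sigma}\Omega^{\b\beta\gamma}-\Psi_{\alpha\b\sigma}$. Using the contraction identity $\Psi_{\alpha\b\beta}\Psi^{\b\beta\gamma}\Psi_{\gamma\b\sigma}=\Psi_{\alpha\b\sigma}$, the non-positivity of $H$ amounts, in the sense of Hermitian forms, to
$$\Psi_{\alpha\b\beta}\Psi_{\gamma\b\sigma}\Omega^{\b\beta\gamma}\leq\Psi_{\alpha\b\beta}\Psi_{\gamma\b\sigma}\Psi^{\b\beta\gamma}=\Psi_{\alpha\b\sigma},$$
whence $H_{\alpha\b\sigma}\leq 0$ and $\langle\sqrt{-1}R^\Omega(P),P\rangle\leq 0$ on $E^o$.

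The single delicate point, and the only thing needing justification beyond bookkeeping, is the matrix inequality $\Psi\Omega^{-1}\Psi\leq\Psi$ underlying the display. I would make it precise as follows: inversion reverses the Loewner order, so $0<\Psi\leq\Omega$ gives $\Omega^{-1}\leq\Psi^{-1}$; conjugating this inequality by the Hermitian matrix $\Psi$ (a congruence, which preserves the Loewner order) yields $\Psi\Omega^{-1}\Psi\leq\Psi\Psi^{-1}\Psi=\Psi$. Here both positivity hypotheses are used simultaneously: the positivity of $g$ (the K\"ahler assumption) to get $\Psi\leq\Omega$, and the positivity of $\Psi$ (Griffiths negativity on $E^o$) to make $\Psi$ invertible with a well-defined square root so that the congruence step is legitimate. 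Finally I would combine the two regimes: the form is non-positive on $E^o$ by the above and trivially zero on the zero section, and since the coefficients $H_{\alpha\b\sigma}$ are continuous on all of $E$, non-positivity extends across the zero section, completing the proof on $E$.
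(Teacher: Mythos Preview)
Your proposal is correct and follows essentially the same approach as the paper: both reduce the non-positivity to the Loewner inequality $\Psi\Omega^{-1}\Psi\leq\Psi$, obtained from $0<\Psi\leq\Omega$ via operator-monotone inversion and congruence by $\Psi$. Your treatment is in fact more careful than the paper's, since you explicitly separate out the zero section (where $\Psi$ is not invertible but the form vanishes trivially) and spell out the Loewner-order justification that the paper leaves implicit in display (\ref{1.9}).
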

\begin{rem}
Moreover,  $\langle \sqrt{-1}R^\Omega(P),P\rangle$ is a strictly negative $(1,1)$-form on $E^o$ along the horizontal directions, that is 
$$\langle R^\Omega(\xi,\bar{\xi})(P),P\rangle<0$$
for any nonzero vector $\xi=\xi^\alpha\frac{\delta}{\delta z^\alpha}\in \mc{H}_{(z,v)}$, $(z,v)\in E^o$. In fact, from (\ref{1.9}), 
$\langle R^\Omega(\xi,\bar{\xi})(P),P\rangle=0$ if and only if 
\begin{align}\label{1.10}\left(\Psi^{\b{\beta}\gamma}-\Omega^{\b{\beta}\gamma}\right)(\Psi_{\alpha\b{\beta}}\xi^\alpha)(\Psi_{\gamma\b{\sigma}}\b{\xi}^\sigma)=0.\end{align}
Since $\left(\Psi^{\b{\beta}\gamma}-\Omega^{\b{\beta}\gamma}\right)$ is positive definite on $E^o$, so (\ref{1.10}) is equivalent to 
$$\Psi_{\alpha\b{\beta}}\xi^\alpha=0.$$
On the other hand, $(\Psi_{\alpha\b{\beta}})$ is positive definite on $E^o$ by Griffiths negativity of $(E,G)$, which implies that $\xi=0$.  
\end{rem}

The Ricci curvature of the K\"ahler metric is 
\begin{aligns}
  Ric^\Omega&:=\text{Tr}(R^\Omega)=G^{\b{j} i}\left\langle R^\Omega\left(\frac{\p}{\p v^i}\right),\frac{\p}{\p v^j}\right\rangle+
  \Omega^{\b{\beta}\alpha
  }\left\langle R^{\Omega}\left(\frac{\delta}{\delta z^\alpha}\right),\frac{\delta}{\delta z^\beta}\right\rangle\\
&=G^{\b{j} i}(R_{i\b{l}\alpha\b{\beta}}R_{k\b{j}\gamma\b{\sigma}}v^k\b{v}^l\Omega^{\b{\beta}\gamma}+R_{i\b{j}\alpha\b{\sigma}})dz^\alpha\wedge d\b{z}^\sigma\\
&\quad+\Omega^{
  \b{\beta}
  \alpha
}
(\b{\p}(\p\Omega_{\alpha\b{\sigma}}\Omega^{\b{\sigma}\gamma})\Omega_{\gamma\b{\beta}}-R_{p\b{l}\gamma\b{\beta}}R_{k\b{q}\alpha\b{\sigma}}v^k\b{v}^lG^{
  \b{q}
  p
}
dz^\gamma\wedge d\b{z}^\sigma)\\
&=\b{\p}\p\log \det(G_{i\b{j}})+\b{\p}\p\log\det (\Omega_{\alpha\b{\beta}})\\
&=\b{\p}\p\log \left(\det(G_{i\b{j}})\cdot\det (\Omega_{\alpha\b{\beta}})\right).
\end{aligns}
 
Denote by $\iota: M\to E$ the natural embedding (as the zero section of $E$), then 
 \begin{align*}
 \iota^*(Ric^{\Omega})=\iota^*(\b{\p}\p\log \left(\det(G_{i\b{j}})\cdot\det (\Omega_{\alpha\b{\beta}})\right))=\b{\p}\p\log \left(\det(G_{i\b{j}})\cdot\det (g_{\alpha\b{\beta}})\right)	
 \end{align*}
is the $(1,1)$-form on $M$. 

In particular,  consider $E=TM$ and $(M, \omega)$ is  Teichm\"uller space with Weil-Petersson metric, that is, $(G_{i\bar{j}})=(g_{\alpha\b{\beta}})$ is  Weil-Petersson metric.  
 For any unit  vector $\xi\in E=TM$, i.e. $\|\xi\|^2=1$, then 
\begin{aligns}\label{1.7}
\iota^*(Ric^{\Omega})(\xi,\bar{\xi}) &=	\b{\p}\p\log \left(\det(g_{\alpha\bar{\beta}})\cdot\det (g_{\alpha\b{\beta}})\right)(\xi,\bar{\xi}) \\
&=2 Ric(\xi,\bar{\xi}),
\end{aligns}
where $Ric:=\b{\p}\p\log \det (g_{\alpha\b{\beta}})$ denotes the Ricci curvature of Weil-Petersson metric. From \cite[Lemma 4.6 (i)]{Wol}, the Ricci curvature of Weil-Petersson metric satisfies
\begin{align}\label{1.8}
	Ric(\xi,\bar{\xi})\leq -\frac{1}{2\pi(g-1)}.
\end{align}
where $g$ denotes the genus of Riemann surfaces. Substituting (\ref{1.8}) into (\ref{1.7}), we obtains
\begin{align*}
\iota^*(Ric^{\Omega})(\xi,\bar{\xi})\leq -\frac{1}{\pi(g-1)}.	
\end{align*}
Thus
\begin{prop}\label{prop3}
	Let $(M,\omega)$ be Teichm\"uller space  with the Weil-Petersson metric, and let $E=TM$ be the holomorphic tangent bundle. When restricting to $M$, the Ricci curvature of $\Omega$ is bounded from above by   $-\frac{1}{\pi(g-1)}$.
\end{prop}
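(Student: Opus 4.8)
The plan is to obtain the Ricci form of $\Omega$ in closed form by tracing the Chern curvature, and then to read off the bound after restricting to the zero section and specializing to the Weil-Petersson situation. First I would take the trace $Ric^\Omega=\mathrm{Tr}(R^\Omega)$ over the full tangent bundle $TE=\mc H\oplus\mc V$, summing the vertical contribution $G^{\b j i}\langle R^\Omega(\p/\p v^i),\p/\p v^j\rangle$ and the horizontal contribution $\Omega^{\b\beta\alpha}\langle R^\Omega(\delta/\delta z^\alpha),\delta/\delta z^\beta\rangle$ supplied by the two formulas of Proposition \ref{prop1}. The crucial point is that the quartic curvature terms of the shape $R_{i\b l\alpha\b\beta}R_{k\b j\gamma\b\sigma}$ occurring in (i) and in (ii) are exact negatives of one another after the two contractions $G^{\b j i}$ and $\Omega^{\b\beta\alpha}$ (relabel the summation indices to see that $+G^{\b q p}\Omega^{\b\beta\mu}R_{p\b l\alpha\b\beta}R_{k\b q\mu\b\sigma}v^k\b v^l$ cancels its horizontal counterpart), leaving only the two first Chern forms: $Ric^\Omega=\b{\p}\p\log(\det(G_{i\b j})\cdot\det(\Omega_{\alpha\b\beta}))$. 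This cancellation is the one genuinely computational input, and I would verify it carefully, since it is precisely what turns the trace into a $\b{\p}\p\log\det$ and makes the determinant factorization valid.

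Second I would restrict along the natural embedding $\iota:M\ra E$ as the zero section. On $M$ one has $v=0$, so $\Psi_{\alpha\b\beta}=-R_{i\b j\alpha\b\beta}v^i\b v^j=0$ and hence by \eqref{1.2} the horizontal block degenerates to $\Omega_{\alpha\b\beta}=g_{\alpha\b\beta}$. Pulling back the Ricci form then yields $\iota^*(Ric^\Omega)=\b{\p}\p\log(\det(G_{i\b j})\cdot\det(g_{\alpha\b\beta}))$, a genuine $(1,1)$-form on $M$.

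Third I would specialize to $E=TM$ equipped with the Weil-Petersson metric, so that $G_{i\b j}=g_{\alpha\b\beta}$ as Hermitian metrics. Then $\det(G_{i\b j})\cdot\det(g_{\alpha\b\beta})=(\det g_{\alpha\b\beta})^2$, and since $\log$ of a square doubles, $\iota^*(Ric^\Omega)=2\,\b{\p}\p\log\det(g_{\alpha\b\beta})=2\,Ric$, which is precisely \eqref{1.7}. It then remains to feed in Wolpert's estimate \cite[Lemma 4.6(i)]{Wol}, recorded as \eqref{1.8}, namely $Ric(\xi,\b\xi)\le-\frac{1}{2\pi(g-1)}$ for every unit vector $\xi$; multiplying by the factor $2$ gives $\iota^*(Ric^\Omega)(\xi,\b\xi)\le-\frac{1}{\pi(g-1)}$, which is the asserted bound.

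The main obstacle lies entirely in the first step, the trace identity $\mathrm{Tr}(R^\Omega)=\b{\p}\p\log(\det G\cdot\det\Omega)$; once the mixed quartic terms are seen to cancel, everything downstream is bookkeeping, namely the vanishing of $\Psi$ on the zero section, the squaring that produces the factor $2$, and the appeal to Wolpert's inequality. The only other point needing attention is conventional: I would check that Wolpert's normalization of the Weil-Petersson metric matches the one used here, so that the constant $\tfrac{1}{2\pi(g-1)}$ is transcribed correctly. A mismatch in convention would change only this numerical constant, not the qualitative conclusion that the restricted Ricci curvature is strictly negative and bounded away from zero.
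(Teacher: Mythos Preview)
Your proposal is correct and follows essentially the same route as the paper: trace $R^\Omega$ via Proposition~\ref{prop1}, observe the cancellation of the quartic $R\cdot R$ terms to obtain $Ric^\Omega=\b\p\p\log(\det G_{i\b j}\cdot\det\Omega_{\alpha\b\beta})$, restrict to the zero section where $\Omega_{\alpha\b\beta}=g_{\alpha\b\beta}$, specialize to $E=TM$ with $G=g$ to get the factor $2$, and invoke Wolpert's bound \eqref{1.8}. Your explicit identification of the index relabeling that kills the quartic terms is in fact more detailed than the paper's own computation, which simply asserts the $\b\p\p\log\det$ identity.
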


\vspace{5mm}

Now we begin to prove our main theorem:
\begin{proof}[Proof of Theorem \ref{main theorem}]
	From Corollary \ref{bound},  the quasifuchsian space $QF(S)$ embeds into a neighborhood of a zero section 
in the holomorphic tangent bundle of $\mathcal{ T}(S)$ which is contained in a ball of radius $9\pi(g-1)$ in $L^2$-norm  on each fiber $B(X)$. 
Since the tangent bundle $\mc{B}(S)$ of $\mc{T}(S)$ with the Weil-Petersson metric $\omega_{WP}$ is Griffiths negative, so it defines a norm $G$ on 
$\mc{B}(S)$. Denote $\pi: \mc{B}(S)\to \mc{T}(S)$, then the following $(1,1)$-form
$$\Omega=\pi^*\omega_{WP}+\sqrt{-1}\p\b{\p}G$$
defines a mapping class group invariant K\"ahler metric on $\mc{B}(S)$ (see \cite{KZ}). From (\ref{metric1}), one sees that $\Omega$ is an extension of the Weil-Petersson metric $\omega_{WP}$.
From \cite[Theorem 1.5]{Mc}, the Weil-Petersson metric $\omega_{WP}$ has a bounded primitive with respect to Weil-Petersson metric. By Corollary \ref{cor1}, the  K\"ahler metric $\Omega$ also has a bounded primitive with respect to $\Omega$. And by Propositions \ref{prop2}, \ref{prop3}, the Chern curvature $R^\Omega$ of $\Omega$ is non-positive when evaluated on the tautological section $P$, and  its Ricci curvature is bounded from above by $-\frac{1}{\pi(g-1)}$ when restricted to Teichm\"uller space.
\end{proof}

\begin{rem} Finally to put our results in perspective
  we remark that
  the space $\cal P(S)$ of marked complex projective 
  structures is identified with the cotangent bundle of $\mathcal 
  T(S)$ and the  natural holonomy map $\cal P(S)\to 
\chi=  \chi(\pi_1(S), PSL(2,\C))$
  to the character variety is a local biholomorphic mapping
by the results of Earle-Hejhal-Hubbard  \cite{Ea, He, Hu} (see also \cite[Theorem 5.1]{Dumas}).
  Thus our constructions and 
   results  are also valid for $\cal P(S)$ and for  its image 
   in   $\chi$. The space $QF(S)$ of quasifuchsian representations
  is also an open subset of   $\chi$,
  $\mathcal T(S)\subset QF(S)
  \subset   \chi$, and
  it might be interesting to understand the geometry
  of  character variety   $ \chi$
  using our metric on these open subsets.
  \end{rem}

  The above remark applies also to
  the  Hitchin component for any real split simple Lie group
  $G$ of real rank
  two, namely $G=SL(3, R), Sp(2, \mathbb R), G_2$.
  Indeed Labourie \cite{La} generalized the construction
in \cite{KZ} of  K\"ahler
metric
for $SL(3, \mathbb R)$ 
to the above $G$.
In this case the Hitchin component
is proved to be a bundle over Teichm\"u{}ller space
with fiber being space of holomorphic differentials
of degree $3, 4, 6$, respectively. Hence we obtain 
\begin{cor} The curvature of  the K\"ahler metric on the Hitchin component for real split simple Lie groups of real rank 2 vanishes along vertical directions, and non-positive along tautological sections. 
\end{cor}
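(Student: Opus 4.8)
The plan is to recognize the Hitchin component, equipped with Labourie's K\"ahler metric, as a particular instance of the general construction of Section 2, and then to quote Proposition \ref{prop2} together with the Remark following Proposition \ref{prop1} without any further computation. For $G\in\{SL(3,\mathbb R),\,Sp(2,\mathbb R),\,G_2\}$, Labourie's parametrization \cite{La} identifies the Hitchin component with the total space of a holomorphic vector bundle $\pi:E\to \mathcal T(S)$ whose fiber over $X$ is the space $H^0(X,K_X^{k})$ of holomorphic $k$-differentials, with $k=3,4,6$ respectively, in parallel with the quadratic-differential bundle $\mathcal Q(S)$ of the quasifuchsian case $(k=2)$. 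First I would fix this identification and endow $E$ with the fiberwise $L^2$-metric $G$ defined through the hyperbolic metric on $X$, exactly as in Sections 1.3--1.4.

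Next I would verify the Griffiths negativity required to run the construction. The fiber $H^0(X,K_X^{k})$ is the stalk of the direct image $p_*\bigl(K^{\otimes k}_{\mathcal C/\mathcal T(S)}\bigr)$ of a positive power of the relative canonical bundle of the universal curve $p:\mathcal C\to\mathcal T(S)$, and the $L^2$-metric on such a direct image is Griffiths positive by Berndtsson's theorem \cite{Bo}, by the same argument used for quadratic differentials in \cite{KZ}. Passing to the dual via the fiberwise $L^2$-pairing, as in the identification $\Phi\mapsto\beta_\Phi$ of Section 1.4 which preserves $L^2$-norms, then exhibits the relevant bundle as Griffiths negative. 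By Lemma \ref{lemma1} the $(1,1)$-form $\Omega=\pi^*\omega+\sqrt{-1}\,\p\b{\p}G$ is then a genuine K\"ahler metric on $E$; identifying it with Labourie's metric is the remaining point, addressed below.

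With this in hand the conclusion is immediate, since the curvature identities of Proposition \ref{prop1} were derived solely from the shape $\Omega=\pi^*\omega+\sqrt{-1}\,\p\b{\p}G$ over a Griffiths-negative bundle and are insensitive to the rank $r$ of $E$ or to the base $M$. The Remark after Proposition \ref{prop1} then gives $\langle R^\Omega(\tfrac{\p}{\p v^k},\bullet)(\tfrac{\p}{\p v^i}),\tfrac{\p}{\p v^j}\rangle=0$, i.e. the curvature vanishes along the vertical directions, while Proposition \ref{prop2} gives that $\langle\sqrt{-1}R^\Omega(P),P\rangle$ is a non-positive $(1,1)$-form, i.e. non-positivity along the tautological section $P=v^i\tfrac{\p}{\p v^i}$. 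These are the two assertions of the Corollary.

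The main obstacle is the second step: confirming that Labourie's metric really coincides with $\Omega=\pi^*\omega+\sqrt{-1}\,\p\b{\p}G$ under the above identification, and that Berndtsson's positivity passes unchanged from degree $2$ to degrees $3,4,6$. The positivity argument is formally identical in every degree, since one only replaces $K^2$ by $K^k$, so the actual work is normalization bookkeeping: checking that the base term of Labourie's metric is a fixed K\"ahler metric $\omega$ on $\mathcal T(S)$ and that its fiber term is precisely the $L^2$-norm function $G$, so that Lemma \ref{lemma1} and Proposition \ref{prop1} apply verbatim. Once this matching is established, no additional curvature computation is needed.
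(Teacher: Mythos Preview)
Your proposal is correct and follows essentially the same approach as the paper. The paper's argument for this Corollary is in fact just the short paragraph preceding it: Labourie \cite{La} generalized the construction of \cite{KZ} so that his K\"ahler metric on the Hitchin component is precisely of the form $\Omega=\pi^*\omega+\sqrt{-1}\,\p\b{\p}G$ on a Griffiths-negative bundle of holomorphic $k$-differentials ($k=3,4,6$) over $\mathcal T(S)$, whence the Remark after Proposition \ref{prop1} and Proposition \ref{prop2} apply verbatim. The only difference is one of detail: the paper simply invokes Labourie's result to dispatch the ``matching'' step you flag as the main obstacle, whereas you sketch the Berndtsson positivity/dualization argument explicitly; both routes arrive at the same place.
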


\vspace{3mm}

\end{document}